\documentclass[11pt,oneside]{article}
 \usepackage{afterpage}
 \usepackage{fancyhdr}
 \usepackage{lipsum}
 \newcommand\shorttitle{Curvatures on homogeneous Finsler spaces}
 \newcommand\authors{G. Shanker, Seema and Jaspreet Kaur}
 
 \fancyhf{}
 
 \fancyhead[C]{%
 	\ifodd\value{page}
 	\small\scshape\authors
 	\else
 	\small\scshape\shorttitle
 	\fi
 }
 \pagestyle{fancy}
 \usepackage{supertabular, setspace,hyperref}
 \usepackage[a4paper, total={6.5in, 9.5in}]{geometry}
 \usepackage[utf8]{inputenc}
 \usepackage{amsmath}
 \usepackage{mathtools}
 \usepackage{amsfonts}
 \usepackage{amssymb}
 \usepackage{amsthm}
 \usepackage{caption}
 \usepackage{subcaption}
 \usepackage{authblk}
 \usepackage[sort]{cite}
 \newtheorem{theorem}{Theorem}[section]

 \newtheorem{example}[theorem]{Example}

 \newtheorem{remark}{\sc Remark}
 \newtheorem{lemma}{\sc Lemma}[section]
 \newtheorem{corollary}{\sc Corollary}[section]
 \newtheorem{definition}{\sc Definition}[section]

 \newcommand{\be}{\begin{eqnarray}}
 	\newcommand{\ee}{\end{eqnarray}}
 \newcommand{\Be}{\begin{eqnarray*}}
 	\newcommand{\Ee}{\end{eqnarray*}}
 \newcommand{\bee}{\begin{equation}}
 	\newcommand{\eee}{\end{equation}}
 \newcommand{\ba}{\begin{array}}
 	\newcommand{\ea}{\end{array}}
 \newcommand{\bl}{\begin{lemma}}
 	\newcommand{\el}{\end{lemma}}
 \newcommand{\bd}{\begin{definition}}
 	\newcommand{\ed}{\end{definition}}
 \newcommand{\bt}{\begin{theorem}}
 	\newcommand{\et}{\end{theorem}}
 \newcommand{\bp}{\begin{proof}}
 	\newcommand{\ep}{\end{proof}}
 \newcommand{\bi}{\begin{itemize}}
 	\newcommand{\ei}{\end{itemize}}
 \newcommand{\br}{\begin{remark}}
 	\newcommand{\er}{\end{remark}}
 \newcommand{\bc}{\begin{corollary}}
 	\newcommand{\ec}{\end{corollary}}
 \newcommand{\bex}{\begin{example}}
 	\newcommand{\eex}{\end{example}}

 \usepackage{chngcntr}
 
 \counterwithin*{equation}{section}
 \counterwithin*{equation}{section}
 \begin{document}
 	\afterpage{\cfoot{\thepage}}
 	\clearpage
 	\date{}

 	\title{\textbf{Curvatures on homogeneous Finsler spaces}}
 	
 	\author[]{Gauree Shanker}
 	\author[]{Seema Jangir\thanks{corresponding author, Email: seemajangir2@gmail.com}}
    \author[]{Jaspreet Kaur}
 	\affil[]{\footnotesize Department of Mathematics and Statistics,\\
 		Central University of Punjab, Bathinda, \\Punjab-151 001, India}
 	\maketitle
 	\begin{center}
 		\textbf{Abstract}
 		
 	\end{center}
 	\begin{small}
 	The main aim of this article is to calculate explicit formula for S-curvature in homogeneous generalized $m$-Kropina metric.
 	Further, we also deduce mean Berwald curvature for homogeneous generalized $m$-Kropina metric from S-curvature.\\
 	\end{small}
 	\textbf{Mathematics Subject Classification:} 53C30; 53C60\\
 	\textbf{Keywords and Phrases:} Homogeneous Finsler space, S-curvature, mean Berwald curvature, generalized $m$-Kropina metric.

 	\section{Introduction}
 		
It is well-known that in Riemannian geometry, inner product spaces in any dimension upto isomorphism are unique, but this doesn't hold in the case of Minkowski norm.
Finsler geometry encompasses Riemannian, Euclidean, and Minkowskian geometries as special cases, and thus it affords great
generality for describing a number of phenomena in physics and continuum mechanics.
In continuum mechanical behaviour of solids, the importance of Finsler geometry was observed by many physicists \cite{Kroner,Clayton2015}. Along with mechanics, importance of Finsler geometry in describing fundamental descriptions of other branches of  physics such as electromagnetism, quantum theory and gravitations is striking.
As we know that various Finsler metrics on tangent space of a Finsler manifold are not isomorphic.\\
 In Shen's words, a Finsler manifold is quite colorful. Thus, it is very fascinating to study Finsler manifolds with single color. Homogeneous Finsler spaces
are examples of Finsler manifolds with single color, which have been studied by Deng \cite{Deng2012} Latifi and Razavi \cite{Latifi}.
The origin of homongenous Riemannian spaces was initiated with Myers-Steenrod theorem in [1939] which states that group of isometries of a connected Riemannian manifold admits a differentiable structure such that it forms a Lie transformation group of manifold.
This theorem was breakthrough, since it extended the scope of applying Lie theory to all homogeneous Riemannian manifolds. Homogeneous spaces are natural generalization of symmetric spaces and these spaces retain many of its great properties. One of the important properties is the existence of a transitive group of transformations, which are sometimes called symmetries.\\
Deng \cite{Deng2012} and Shen \cite{Shen} examined the non-Riemannian curvatures such as Cartan torsion, Landsberg curvature, mean Landsberg curvature, Berwald curvature, S-curvature which disappear in the case of Riemannian \cite{Tayebi}. While studying curvature properties of homogeneous spaces by using the formula for the sectional curvature of a left invariant Riemannian metric on a Lie group, Milnor \cite{Milnor} calculated S-curvature in homogeneous spaces.
Geometrically, S-curvature is study of rate of change of distortions along geodesics.
It is known that S-curvature is a non-Riemannian quantity means every Riemannian manifold has vanishing S-curvature.
The notion of  S-curvature was introduced by Shen \cite{Shen1997}
for given comparison theorems on Finsler manifolds in 1997. This non-Riemannian quantity is used for characterization of Finsler metrics among Berwald metric, Riemannian metric and Locally
Mikowskian metric. Shen \cite{Chern2004} also gave explicit formula for S-curvature in local coordinate system. 

  However, in homogeneous Finsler spaces, mathematicians were interested in finding S-curvature formula irrespective of local coordinate system. For homogeneous Randers space, S-curvature formula was calculated by Deng \cite{Deng2009}.  Further, calculation of S-curvature on  homogeneous $(\alpha,\beta)$-metrics was another task completed by Deng and Wang \cite{Deng2010}.
Recently, Shanker and kaur rectified the S-curvature formula for homogeneous $(\alpha,\beta)$-metric provided by Deng and Wang \cite{Deng2010}.
Later, calculation of S-curvature on a homogeneous Finsler space with square metric and Randers changed square metric have been accomplished in \cite{Sarita,sarita2}.
The purpose of this paper is calculating S-curvature and mean Berwald curvature in homogeneous generalized $m$-Kropina Finsler spaces.

\section{Preliminaries}

In this section, we discuss basic definitions and notations of Finsler geometry. For more elaborate concepts of Finsler geometry and homogeneous Finsler geometry, refer \cite{BCS,Chern2004,Deng2012 }. 

\begin{definition}
	
	Let $V$ be an $n$-dimensional real vector space endowed with a smooth norm $F$ on $V\backslash\{0\}$ satisfying the following conditions:
	\begin{enumerate}
		\item $F(u) \geq 0 \ \forall \ u \ \in V,$
		\item $F(\lambda u) = \lambda F(u) \ \forall \ \lambda >0,$ i.e., F is positively homogeneous,
		\item Let $\{ u_{1}, u_{2},...,u_{n}\}$ be the basis of $V$ such that $y = y^{1}u_{1} + y^{2}u_{2}+...+y^{n}u_{n}$.
		Then the Hessian matrix 
		\begin{center}
			$(g_{ij}) := \left( \left[ \dfrac{1}{2} F^{2}\right]_{y^{i}y^{j}}\right), $  
		\end{center}
		is positive definite at every point of $V\backslash \{0\}$. The pair $(V,F)$ is called Minkowski space and $F$ is called Minkowski norm.
	\end{enumerate}
\end{definition}
For example, a Minkowski norm $F$ is said to be Euclidean or coming from inner product, if $\langle, \rangle$ is an inner product on $V,$ and we define $F(y) = \sqrt{\langle y,y \rangle}$ as Minkowski norm.

\begin{definition}
	
	Let $M$ be a connected (smooth) manifold. A Finsler metric on $M$ is a function $F : TM \rightarrow [0,\infty)$ which satisfies:
	\begin{enumerate}
		\item $F$ is smooth on slit tangent bundle $TM\backslash\{0\},$
		\item The restriction of $F$ to any $T_{x}M, x\in M $ is a Minkowski norm.
		
	\end{enumerate}
	The space $(M,F)$ is called Finsler space.
	
\end{definition}

Let $\gamma : [0,1] \rightarrow M$ be a $C^{1}$-curve. Then Finsler length $L(\gamma)$ of $\gamma$ is defined as  
\begin{center}
	$ L(\gamma)= \int_{0}^{1} F(\gamma(t),\gamma^{'}(t))dt$.
	
\end{center}
Further,  Finsler distance $d_{F}(p,q)$ between two points $p,q \in M$ is defined as 
\begin{center}
	$d_{F} (p,q) =inf_{\gamma} L(\gamma),$ 
\end{center}    where infimum is taken over all piecewise $C^{1}$-curves joining $p$ and $q$.

\begin{definition}
	Let $ F = \alpha \phi(s) ; s = \beta/\alpha,$
	where $\phi$ is a smooth function on an open interval $(-b_{0},b_{0}),\ \alpha = \sqrt{a_{ij}(x)y^{i}y^{j}}$ is a Riemannian metric, $ \beta = b_{i}(x) y^{i}$  is a 1-form on an $n$-dimensional manifold with $||\beta || < b_{0}.$ Then, $F$ is Finsler metric if and only if following conditions are satisfied:
	\begin{equation}{\label{a}}
	\phi(s)>0,\  \phi(s)- s\phi^{'}(s)+ (b^{2}-s^{2}) \phi^{''}(s) >0 \ \forall \ |s| \leq b < b_{0}.
	\end{equation}
	In this, if $\phi$ does not satisfy the equation \ref{a} or $\phi(0)$ is not defined, then $(\alpha,\beta)$-metric is said to be singular Finsler metric.
\end{definition}
If we take  $\phi(s) = \dfrac{1}{s^{m}}, (s \neq 0)$, we get an important class of $(\alpha,\beta)$- metrics known as generalized $m$-Kropina metric. Hence, generalized $m$-Kropina metric is defined as $ F = \dfrac{\alpha^{m+1}}{\beta^{m}}.$ \\
In another way, generalized $m$-Kropina metric can also be expressed using the following Lemma:

\begin{lemma}{\label{Lemma3.1}}
	
	Let $(M,\alpha)$ be a Riemannian space. Then the generalized m-Kropina space, $(M,F)$ where $F = \dfrac{\alpha^{m+1}}{\beta^{m}}, (m \neq -1,0,1)\ \beta =  b_{i}y^{i},$ a 1-form with $||\beta||= \sqrt{b_{i} b^{i}} < 1,$ consists of Riemannian metric $\alpha$ along with a smooth vector field $X$ on $M,$ which satisfies $\alpha(X|_{x}) < 1$ $\forall \ x\in M,$ i.e., 
	$$ F(x,y) = \dfrac{\alpha(x,y)^{m+1}}{\langle X|_{x},y \rangle ^{m}}.$$
	
\end{lemma}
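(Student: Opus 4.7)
The plan is to exhibit the vector field $X$ explicitly via the musical isomorphism induced by $\alpha$ and then verify that both the pointwise pairing and the norm condition match up with those of $\beta$. Since $\alpha$ is Riemannian, the symmetric bilinear form with components $a_{ij}(x)$ is positive definite at every $x$, so its matrix inverse $a^{ij}(x)$ is smooth in $x$.

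First I would set $X := \beta^{\sharp}$, i.e., define $X$ in local coordinates by $X^{i}(x) = a^{ij}(x)\, b_{j}(x)$. Smoothness of $X$ is immediate from that of $a^{ij}$ and $b_{j}$, and the construction is chart-independent because it is the standard musical isomorphism $T^{*}M \to TM$. Next, a one-line index computation shows that for any $y = y^{i}\partial_{i} \in T_{x}M$,
\[
\langle X|_{x}, y \rangle = a_{ij}(x)\, X^{i}(x)\, y^{j} = a_{ij}(x)\, a^{ik}(x)\, b_{k}(x)\, y^{j} = b_{j}(x)\, y^{j} = \beta(x,y),
\]
so $\beta^{m} = \langle X|_{x}, y \rangle^{m}$ wherever $\beta$ does not vanish; substituting into $F = \alpha^{m+1}/\beta^{m}$ yields the claimed expression.

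Finally, I would match the size conditions: a direct calculation gives
\[
\alpha(X|_{x})^{2} = a_{ij}(x)\, X^{i}(x)\, X^{j}(x) = a^{kl}(x)\, b_{k}(x)\, b_{l}(x) = \|\beta\|_{x}^{2},
\]
so the hypothesis $\|\beta\| < 1$ that guarantees $F$ is a genuine Finsler metric (via condition \ref{a}) is equivalent to $\alpha(X|_{x}) < 1$ for all $x \in M$. Conversely, starting from a smooth $X$ with $\alpha(X|_{x}) < 1$ one recovers $\beta$ as the $\alpha$-flat of $X$, so the correspondence is a bijection between admissible 1-forms and admissible vector fields. No step here is a real obstacle; the only point requiring mild care is to observe that the identification $\beta \leftrightarrow X$ is intrinsic (chart-independent), which is handled automatically by the musical isomorphism.
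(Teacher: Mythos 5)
Your argument is correct, and it is worth noting that the paper itself states Lemma \ref{Lemma3.1} without any proof, so there is no in-paper argument to compare against; what you have written is the standard musical-isomorphism correspondence (exactly the argument used by Deng for the analogous statement about Randers metrics), and both index computations $\langle X|_{x},y\rangle=\beta(x,y)$ and $\alpha(X|_{x})=\|\beta\|_{x}$ are right. The one small imprecision is your parenthetical claim that $\|\beta\|<1$ ``guarantees $F$ is a genuine Finsler metric via condition \ref{a}'': for $\phi(s)=s^{-m}$ the function $\phi(0)$ is undefined, so by the paper's own Definition the generalized $m$-Kropina metric is a \emph{singular} Finsler metric regardless of $\|\beta\|$, and the bound $\|\beta\|<1$ is simply part of the hypotheses being translated into the condition $\alpha(X|_{x})<1$, not a regularity criterion. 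This does not affect the validity of the correspondence you establish, which is the actual content of the lemma.
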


\begin{definition}
Consider an $n$-dimensional vector space $V,$ with $F$ be Minkowski norm on $V$. For $\{e_{i}\}$ to be the basis of $V,$ define a quantity as
$$ \sigma_{F} = \dfrac{\text{Vol}(B^{n})}{\text{Vol}\{ (y^{i}) \in \mathbb{R}^{n} | F(y^{i}e^{i})< 1\}},$$
 where Vol denotes Volume of a subset in Euclidean space $\mathbb{R}^{n}$ and $B^{n}$ is the open ball with radius of 1. Generally, this quantity is dependent on basis $\{e_{i}\}$.
\end{definition}
Next, we recall the definition of S-curvature in Finsler spaces:
\begin{definition}
	The distortion of an $n$-dimensional vector space is defined as 
	\begin{center}
		$ \tau(y) = ln \dfrac{\sqrt{det(g_{ij}(y))}}{\sigma_{F}}$.
	\end{center} For $x\in M,$ let $\tau(x,y)$ be distortion of Minkowski norm $F_{x}$ on $T_{x}M, \ x\in M$. Let $\gamma(t)$ be a geodesic satisfying $\gamma(0) = x \  \& \ \dot\gamma(0) = y. $ Then rate of change of distortion along geodesic $\gamma$ is said to be S-curvature and is defined as 
	\begin{center}
		
		$S(x,y) = \dfrac{d}{dt}[ \tau(\gamma(t)),\dot\gamma(t)]\Big|_{t=0}$.
		
	\end{center} 
\end{definition}

\begin{definition}{\label{iso}}
	A Finsler space $(G/H, F)$ is said to have almost isotropic S-curvature, if there exists a smooth function $d(x)$ on $M$ and closed 1-form $\eta$ which satisfies:
	$$ S(x,y) = (n+1)(d(x)F(y)+\eta(y)),\ x\in G/H, \ y\in T_{x}M.$$
	If $\eta(y) = 0$, then $(G/H,F)$ is said to have isotropic S-curvature.\\
	And if $\eta(y) = 0, c(x) =$ constant, then $(G/H,F)$ is said to have constant S-curvature.
\end{definition}
\begin{definition}{\label{BHT}}
	The Busemann-Hausdroff volume form, $dV_{BH} = \sigma_{BH}(x)dx$ is defined as:
	$$ \sigma_{BH} =  \dfrac{\text{Vol}(B^{n})}{\text{Vol}\{ (y^{i}) \in \mathbb{R}^{n} | F(x,y^{i}e^{i})< 1\}}.$$
	The Holmes-Thompson volume form, $dV_{HT} = \sigma_{HT}(x)dx$ is defined as :
	$$ \sigma_{HT}(x) = \dfrac{1}{\text{Vol}B^{n}} \int_{\{y^{i}\in \mathbb{R}^{n}| F(x,y^{i}e^{i})<1\}} det(g_{ij})dy.$$
	Both volume forms coincide in the case of Riemannian metric, i.e., $dV_{BH} = dV_{HT}=\sqrt{det g_{ij}(x)dx.}$
	  
\end{definition}

\begin{definition}
	Let $(M,F)$ be a Finsler space. A diffeomorphism of $M$ onto itself is said to be isometry, if it preserves the Finsler function, i.e., $F(\phi(p), d\phi_{p}(X)) = F(p,X)$ for any $p\in M$ and $X\in T_{p}M$.
\end{definition}

\begin{definition}
	Let $G$ be a Lie group and $M$ a smooth manifold. If $G$ has smooth action on $M$, then $G$ is called Lie transformation group of $M$.
	
\end{definition}

\begin{definition}
	A connected Finsler space $(M,F)$ is said to be homogeneous Finsler space, if action of group of isometries of $(M,F)$, denoted by $I(M,F)$ is transitive on $M$. 
\end{definition}

\section{S-curvature homogeneous generalized $m$-Kropina space }
Shen and Cheng \cite{ChengShen} in 2009, discovered the formula for S-curvature of an $(\alpha,\beta)$-metric in local coordinate system, which is given as follows:
\begin{equation}{\label{local}}
 S = \left( 2\psi - \dfrac{f^{'}(b)}{bf(b)} \right) (r_{0} + s_{0})- \alpha^{-1} \dfrac{\Phi}{2 \Delta^{2}} (r_{00} - 2\alpha Q s_{0}), \ \text{where}
 \end{equation} 
$$Q = \dfrac{\phi^{'}}{\phi-s\phi^{'}}, \ \psi = \dfrac{Q^{'}}{2\Delta},$$
$$\Delta = 1+sQ+(b^{2}-s^{2})Q^{'}, $$ 
$$\Phi = (sQ^{'}-Q)(1+n\Delta+sQ)-(b^{2}-s^{2})(1+sQ)Q^{''},$$ 
$$ r_{ij} := \dfrac{1}{2} (b_{i|j}+b_{j|i}), \ r_j = b^{i}r_{ij}, \ r_{0} = r_{i}y^{i}, \  r_{00}= r_{ij}y^{i}y^{j},$$ 
$$s_{ij} := \dfrac{1}{2} (b_{i|j}-b_{j|i}), \ s_{j} = b^{i}s_{ij}, \ s_{0} = s_{i}y^{i},$$

And $f(b)$ used in equation \ref{local} is defined as\\ \\
$$
f(b) =  \begin{cases}
\dfrac{\int_{0}^{\pi}sin^{n-2}tdt}{\int_{0}^{\pi}\dfrac{sin^{n-2}t}{\phi(bcost)^{n}}dt} &\quad\text{if}\ dV = dV_{BH}, \\ \\
\dfrac{\int_{0}^{\pi} (sin^{n-2}t)T(bcost)dt}{\int_{0}^{\pi}(sin^{n-2}t)dt } &\quad\text{if} \ dV = dV_{HT}. \\
\end{cases}$$\\ \\ 
where, $dV_{BH}, dV_{HT}$ are Busemann-Hausdroff volume form and Holmes-Thompson volume form  are defined in defintion \ref{BHT}.\\
 We can easily see that in the case of constant Riemannian length $b$, the parameter $r_{0}+s_{0}$ vanishes, hence equation \ref{local} reduces to 
 $$S = \alpha^{-1} \dfrac{\Phi}{2 \Delta^{2}} (r_{00} - 2\alpha Q s_{0})$$

Next, we recall the amendment done in S-curvature formula in homogeneous Finsler spaces proved in \cite{Kiran}.

\begin{theorem} {\cite{Kiran}} {\label{Thm1}}
Let $F = \alpha \phi(s)$ be a $G$-invariant $(\alpha, \beta)$- metric on the reductive homogeneous Finsler space $G/H$ with a decomposition of the Lie algebra $\mathfrak{g} = \mathfrak{h} +\mathfrak{m}.$ Then S-curvature is given by 
\begin{equation}{\label{eq1}}
S(H,y) =  \dfrac{\Phi}{2\alpha\Delta^{2}} \Bigg( \Big \langle [v,y]_{\mathfrak{m}}, y \Big \rangle + \alpha Q \Big \langle [v,y]_{\mathfrak{m}},v \Big \rangle  \Bigg),
\end{equation}
where, $v\in \mathfrak{m}$ corresponds to the 1-form $\beta$ and $\mathfrak{m}$ is identified with the tangent space $T_{H}(G/H)$ of $G/H$ at the origin $H.$
\end{theorem}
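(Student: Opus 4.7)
The plan is to start from the local-coordinate S-curvature formula \eqref{local} for an $(\alpha,\beta)$-metric and rewrite its two ingredients $r_{00}$ and $s_0$ in purely Lie-theoretic terms, using the fact that every geometric quantity built from a $G$-invariant metric on $G/H$ can be evaluated at the origin $H$ in terms of the reductive bracket on $\mathfrak{m}$.

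\textbf{Step 1: elimination of the $(r_0+s_0)$ term.} Since $F=\alpha\phi(\beta/\alpha)$ is $G$-invariant, so are $\alpha$ and $\beta$; hence $b=\|\beta\|_\alpha$ is a constant function on $G/H$. Writing $v\in\mathfrak{m}$ for the $\alpha$-dual of $\beta$ at the origin, metric compatibility of the Levi-Civita connection gives
$$r_0+s_0 \;=\; b^i\,b_{i|j}\,y^j \;=\; \langle v,\nabla_y v\rangle \;=\; \tfrac{1}{2}\,y\bigl(\|v\|^2\bigr)\;=\;0,$$
so the first summand of \eqref{local} vanishes and the formula collapses to
$$S(H,y) \;=\; \frac{\Phi}{2\alpha\,\Delta^2}\bigl(2\alpha Q\,s_0 - r_{00}\bigr).$$

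\textbf{Step 2: Lie-theoretic expressions for $r_{00}$ and $s_0$.} The two identities needed to match \eqref{eq1} are
$$r_{00}\;=\;-\langle[v,y]_\mathfrak{m},y\rangle,\qquad s_0\;=\;\tfrac{1}{2}\langle[v,y]_\mathfrak{m},v\rangle.$$
To prove them, extend $y$ and $v$ to $G$-invariant vector fields on $G/H$ and apply the Koszul formula at $H$; since the pointwise inner product of two $G$-invariant fields is a constant function on $G/H$, the three directional-derivative terms of Koszul vanish, so $\nabla_y v$ at the origin reduces to a combination of $\mathfrak{m}$-brackets determined by the reductive structure. Contracting against $y$ yields the first identity (for the fully symmetric $r_{00}$), while contracting the antisymmetric part of $\nabla\beta$ against $b^i$, i.e.\ against $v$, yields the second. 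Substituting both into the reduced expression from Step~1 reproduces exactly \eqref{eq1}.

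\textbf{Main obstacle.} The delicate technical point is Step~2: correctly tracking the sign that relates the bracket of $G$-invariant vector fields at $H$ to the bracket $[\,\cdot\,,\,\cdot\,]_\mathfrak{m}$, and cleanly separating the symmetric and antisymmetric parts of $\nabla\beta$ so that $r_{00}$ and $s_0$ pick up their correct coefficients $-1$ and $\tfrac{1}{2}$. This is precisely the computation whose earlier version in the homogeneous $(\alpha,\beta)$-literature required rectification, as recorded in \cite{Kiran}. Once the two identities above are in hand, everything else is direct substitution into the reduced formula from Step~1.
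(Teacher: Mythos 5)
The paper does not actually prove Theorem \ref{Thm1}: it is imported verbatim from \cite{Kiran} (``we recall the amendment done \dots proved in \cite{Kiran}''), so there is no in-paper proof to compare against. Judged on its own, your strategy is the standard one used in that reference and in Deng's earlier work on homogeneous Randers and $(\alpha,\beta)$-spaces: kill the $(r_0+s_0)$ term using the constancy of $b=\|\beta\|_\alpha$, convert $r_{00}$ and $s_0$ into bracket expressions at the origin, and substitute into \eqref{local}. Your Step 1 is correct (and you even keep the overall minus sign that the paper's displayed ``reduced'' formula drops), and the two identities $r_{00}=-\langle[v,y]_{\mathfrak m},y\rangle$, $s_0=\tfrac12\langle[v,y]_{\mathfrak m},v\rangle$ are exactly what is needed; the final algebra reproducing \eqref{eq1} checks out.

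The genuine gap is in your justification of Step 2. You propose to ``extend $y$ and $v$ to $G$-invariant vector fields'' and then invoke the Koszul formula with all derivative terms vanishing. But an arbitrary $y\in\mathfrak m\cong T_H(G/H)$ does \emph{not} extend to a $G$-invariant vector field --- only $\mathrm{Ad}(H)$-fixed vectors of $\mathfrak m$ do --- so for a general reductive $G/H$ the premise of your Koszul computation fails (it is literally correct only in the Lie-group case $H=\{e\}$, where your two identities can indeed be verified from $\nabla_XY=\tfrac12[X,Y]+U(X,Y)$). The standard repair is to use that $\nabla\beta$ is tensorial: keep $v$ as the invariant field dual to $\beta$, extend $y$ and the test vector by the fundamental (Killing) fields $Y^*$, $Z^*$ they generate, use $[Y^*,v]=0$ (invariance of $v$ under the flow of $Y^*$) together with the first-order behaviour of $\langle Y^*,Z^*\rangle$ near the origin, and then separate symmetric and antisymmetric parts of $\nabla\beta$. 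That computation is the entire mathematical content of the theorem --- it is precisely where the earlier Deng--Wang formula went wrong and had to be rectified --- and your proposal asserts its outcome rather than carrying it out. So the route is right and the target identities are right, but the proof of those identities as written would not survive scrutiny beyond the Lie-group case.
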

Using Theorem \ref{Thm1}, we deduce formula for S-curvature in the homogeneous generalized $m$-Kropina spaces.

\begin{theorem}{\label{thm2}}
	Let $G/H$ be a reductive homogeneous Finsler spaces with decomposition of Lie algebra $\mathfrak{g} = \mathfrak{h} +\mathfrak{m},$ and $F = \dfrac{\alpha^{m+1}}{\beta^{m}}$ be a $G$-invariant generalized $m$-Kropina metric. Then S-curvature is given by 
	\begin{equation}{\label{eq2}}
\dfrac{ms[(n-nm)s^{2}+(nm+1)b^{2}]}{[(1-m)s^{2}+b^{2}m]^{2}}\Big[ \dfrac{1}{\alpha} \big \langle [v,y]_{\mathfrak{m}},y\big \rangle-\dfrac{m}{(m+1)s} \big \langle [v,y]_{\mathfrak{m}},v   \big \rangle  \Big], 
	\end{equation}
where, $v\in \mathfrak{m}$ corresponds to the 1-form $\beta$ and $\mathfrak{m}$ is identified with the tangent space $T_{H}(G/H)$ of $G/H$ at the origin $H.$
\end{theorem}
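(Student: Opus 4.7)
The plan is to specialize the general formula (3.2) of Theorem \ref{Thm1} to the case $\phi(s) = s^{-m}$, for which $F = \alpha\phi(\beta/\alpha) = \alpha^{m+1}/\beta^{m}$ is precisely the generalized $m$-Kropina metric under consideration. Once the building blocks $Q$, $\Delta$, $\Phi$ are evaluated at this $\phi$ and substituted into (3.2), the stated formula (3.3) should fall out after algebraic simplification, with no further geometric input needed.

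First I would compute the auxiliary quantities. From $\phi = s^{-m}$, $\phi' = -m s^{-m-1}$, $\phi'' = m(m+1) s^{-m-2}$, one gets $\phi - s\phi' = (m+1)s^{-m}$, so
\[
Q = \frac{\phi'}{\phi - s\phi'} = -\frac{m}{(m+1)s}, \qquad Q' = \frac{m}{(m+1)s^{2}}, \qquad Q'' = -\frac{2m}{(m+1)s^{3}}.
\]
Substituting into $\Delta = 1 + sQ + (b^{2}-s^{2})Q'$ and collecting over the common denominator $(m+1)s^{2}$ produces
\[
\Delta = \frac{(1-m)s^{2} + m b^{2}}{(m+1)s^{2}},
\]
which already accounts for the square appearing in the denominator of (3.3).

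Next I would evaluate $\Phi = (sQ' - Q)(1 + n\Delta + sQ) - (b^{2} - s^{2})(1+sQ)Q''$. A short calculation gives $sQ' - Q = 2m/((m+1)s)$ and $1 + sQ = 1/(m+1)$, so that $1 + n\Delta + sQ$ reduces to $[(1 + n - nm)s^{2} + nm b^{2}]/((m+1)s^{2})$. Putting the two terms of $\Phi$ over the common denominator $(m+1)^{2}s^{3}$, the ``extra'' contribution $+(b^{2} - s^{2})$ coming from the $Q''$ term combines with $(1+n-nm)s^{2} + nm b^{2}$ to collapse into the cleaner numerator
\[
\Phi = \frac{2m\bigl[(n - nm)s^{2} + (nm+1)b^{2}\bigr]}{(m+1)^{2}s^{3}}.
\]

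Finally, I would assemble $\Phi/(2\alpha\Delta^{2})$; the factors $(m+1)^{\pm 2}$ and the powers of $s$ cancel, leaving exactly
\[
\frac{\Phi}{2\alpha\Delta^{2}} = \frac{m s\bigl[(n-nm)s^{2} + (nm+1)b^{2}\bigr]}{\alpha\bigl[(1-m)s^{2} + m b^{2}\bigr]^{2}}.
\]
Combining this with $\alpha Q = -m\alpha/((m+1)s)$ inside the bracket of (3.2) and pulling the $1/\alpha$ out yields precisely the coefficient of $\langle[v,y]_{\mathfrak{m}},y\rangle$ as $1/\alpha$ and of $\langle[v,y]_{\mathfrak{m}},v\rangle$ as $-m/((m+1)s)$, which is the claim (3.3). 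The argument is essentially computational; the only real obstacle is bookkeeping in the simplification of $\Phi$, where one must be careful that the term $-(b^{2}-s^{2})(1+sQ)Q''$ has the correct sign so that the cancellation $(1+n-nm)s^{2} + nmb^{2} + b^{2} - s^{2} = (n - nm)s^{2} + (nm+1)b^{2}$ occurs and produces the clean numerator appearing in (3.3).
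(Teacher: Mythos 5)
Your proposal is correct and follows essentially the same route as the paper: you compute $Q$, $Q'$, $Q''$, $\Delta$, and $\Phi$ for $\phi(s)=s^{-m}$, obtain $\Phi = \dfrac{2m[(n-nm)s^{2}+(nm+1)b^{2}]}{(1+m)^{2}s^{3}}$ and $\Delta = \dfrac{(1-m)s^{2}+mb^{2}}{(1+m)s^{2}}$ exactly as the paper does, and substitute into the formula of Theorem \ref{Thm1}. The simplification of $\dfrac{\Phi}{2\alpha\Delta^{2}}$ and the identification $\alpha Q = -\dfrac{m\alpha}{(m+1)s}$ match the paper's computation, so no further comment is needed.
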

\begin{proof}
	In order to calculate S-curvature in reductive homogeneous Finsler space with $G$-invariant generalized $m$-Kropina metric, we first find the parameters to be used in modified version of S-curvature in Theorem \ref{Thm1}.\\
	Let us recall that\\
	$Q = \dfrac{\phi^{'}}{\phi-s\phi^{'}},$\\ \\
	$\Delta = 1+sQ+(b^{2}-s^{2})Q^{'},$\\ \\
	$\Phi = -(Q-sQ^{'})(1+n\Delta+sQ)-(b^{2}-s^{2})(1+sQ)Q^{''},$\\ \\
	For  $F = \dfrac{\alpha^{m+1}}{\beta^{m}},$ we have $F = \alpha\left( \dfrac{1}{s^{m}}\right),$ such that  $\phi(s)= \dfrac{1}{s^{m}}, s=\dfrac{\beta}{\alpha}.$\\
Now we proceed to calculate these paramters for generalized $m$-Kropina metric.\\ \\
$Q = \dfrac{\phi^{'}}{\phi-s\phi^{'}}
= \dfrac{-m}{s(1+m)},$\\ \\
$Q^{'} = \left( \dfrac{m}{1+m}\right) \dfrac{1}{s^{2}},$\\ \\
$Q^{''} = \left( \dfrac{-2m}{1+m}\right) \dfrac{1}{s^{3}},$\\ \\
$\Delta = 1+sQ+(b^{2}-s^{2})Q^{'}
= 1 + \dfrac{(-ms)}{(m+1)s}+(b^{2}-s^{2})\left( \dfrac{m}{(1+m)s^{2}}\right) 
= \dfrac{(1-m)s^{2}+b^{2}m}{(1+m)s^{2}},$\\ \\
$\Phi = (sQ^{'}-Q)(1+n\Delta+sQ)-(b^{2}-s^{2})(1+sQ)Q^{''} \\ \\
= \Big(\dfrac{2m}{m+1}\Big)\dfrac{1}{s}\Bigg[ \dfrac{ns^{2}+nb^{2}m-nms^{2}}{(1+m)s^{2}}  +1- \dfrac{ms}{(m+1)s}\Bigg] + \dfrac{2m}{(1+m)^{2}}\left( \dfrac{b^{2}-s^{2}}{s^{3}} \right) $\\ \\
$= \dfrac{2m}{(1+m)^{2}s^{3}}$$\Big[ns^{2} + nb^{2}m - nms^{2} +s^{2} +b^{2}-s^{2}  \Big]
 =  \dfrac{2m}{(1+m)^{2}s^{3}}\Big[ (n-nm)s^{2}+(nm+1)b^{2}\Big].$ \\ \\
Now, we substitute all these values in equation \ref{eq1}, which gives \\ 
	$$S(H,y) = \dfrac{ms[(n-nm)s^{2}+(nm+1)b^{2}]}{[(1-m)s^{2}+b^{2}m]^{2}}\Bigg[ \dfrac{1}{\alpha} \big \langle [v,y]_{\mathfrak{m}},y\big \rangle-\dfrac{m}{(m+1)s} \big \langle [v,y]_{\mathfrak{m}},v   \big \rangle  \Bigg] $$
	where, $v\in \mathfrak{m}$ corresponds to the 1-form $\beta$ and $\mathfrak{m}$ is identified with the tangent space $T_{H}(G/H)$ of $G/H$ at the origin $H.$
	This proves Theorem \ref{thm2}..
\end{proof}
A straightforward application of Theorem \ref{thm2} can be seen as:
\begin{corollary}
	Let $(G/H, F)$ be a homogeneous generalized $m$-Kropina Finlser space with similar presumptions as taken in Theorem \ref{thm2}. Then homogeneous generalized $m$-Kropina metric has isotropic S-curvature if and only if S-curvature of $F$ equal to zero. 
	
\end{corollary}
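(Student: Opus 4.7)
The plan is to split the biconditional and observe that one direction is trivial while the other reduces to evaluating the formula of Theorem \ref{thm2} at a single well-chosen direction. The ``if'' direction is immediate from Definition \ref{iso}: if $S\equiv 0$, then the isotropic relation holds with $d(x)\equiv 0$. So all the content lies in the converse, and I would assume $S(x,y)=(n+1)d(x)F(y)$ for some smooth $d$ on $G/H$ and aim to prove $d\equiv 0$. Since $F$, and hence $S$, are $G$-invariant, the scalar $d(x)=S(x,y)/[(n+1)F(y)]$ is $G$-invariant; so $d$ is constant on $G/H$ and it suffices to show $d(H)=0$.

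Next, I would use the explicit formula from Theorem \ref{thm2}: the dependence on the direction $y$ enters only through the two inner products $\langle[v,y]_{\mathfrak{m}},y\rangle$ and $\langle[v,y]_{\mathfrak{m}},v\rangle$, together with an overall prefactor depending only on $s=\beta/\alpha$ and $b^{2}$. The key idea is to choose a direction that annihilates the bracket $[v,y]_{\mathfrak{m}}$; the natural choice is $y=v$ itself, since $[v,v]=0$ forces both inner products to vanish. This gives $S(H,v)=0$ directly from the formula.

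It then remains to observe that at $y=v$ we have $\alpha(v)=\|v\|$ and $\beta(v)=\|v\|^{2}$, so $F(H,v)=\|v\|^{1-m}>0$, since $v\neq 0$ (as $\beta$ is nontrivial, with $\|\beta\|<1$ by Lemma \ref{Lemma3.1}) and $m\neq 1$. Substituting into the isotropic relation yields $(n+1)d(H)\|v\|^{1-m}=0$, hence $d(H)=0$, and therefore $d\equiv 0$, i.e., $S\equiv 0$. The only bookkeeping point, which I do not expect to pose a real obstacle, is that the second bracket term in Theorem \ref{thm2} carries a factor $m/[(m+1)s]$ with $s$ in the denominator; however, at $y=v$ one has $s(v)=\|v\|>0$ and the factor is multiplied by a vanishing inner product, so no singularity arises and the evaluation is legitimate.
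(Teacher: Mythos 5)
Your proposal is correct and follows essentially the same route as the paper: reduce to the origin by homogeneity, evaluate the formula of Theorem \ref{thm2} at $y=v$ so that $[v,v]_{\mathfrak{m}}=0$ kills both bracket terms, and conclude $d(H)=0$, hence $S\equiv 0$. Your write-up is in fact more careful than the paper's (you note the trivial converse, check $F(H,v)=\|v\|^{1-m}>0$, and justify why $d$ is constant), but the underlying argument is the same.
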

\begin{proof}
	Using the definition \ref{iso}, we recall that if $(G/H,F)$ has isotropic S-curvature implies 
	$$ S(x,y) = (n+1)(d(x)F(y)+\eta(y)),\ x\in G/H, \ y\in T_{x}M.$$
	Since in the case of homogeneous space, it is enough to calculate S-curvature at origin, i.e., $x = H, \ y=v$. Put $x= H, \ y=v$ in the formula of S- curvature deduced in Theorem \ref{thm2}, we get $d(H)=0,$ which implies $S(H,y)=0\ \forall y\in T_{x}M$. Hence, $G/H$ has zero S-curvature.

\end{proof}
\section{Mean Berwald curvature}
This section emphasises on calculating mean Berwald curvature for homogeneous Finsler spaces with generalized $m$-Kropina metric.\\
We first recall the concept of mean Berwald curvature in homogeneous Finsler spaces. In \cite{Chern2004}, author discussed the notion of mean Berwald curvature  for Finsler spaces as follows:\\

Consider $E_{ij} = \dfrac{1}{2}\dfrac{\partial^{2}S(x,y)}{\partial y^{i} \partial y^{j}} = \dfrac{1}{2} \dfrac{\partial^{2}}{\partial y^{i} \partial y^{j}}\left( \dfrac{\partial G^{m}}{\partial y^{m}} \right)(x,y),$ where $G^{m}$ are geodesic spray coefficients.\\ \\
Consider the symmetric forms $E_{y}:T_{x}M \times T_{x}M \rightarrow \mathbb{R}$ expressed as $$E_{y}(a,b) = E_{ij}a^{i}b^{j},\text{where} \ a = a^{i} \dfrac{\partial}{\partial y^{i}}, b = b^{j}\dfrac{\partial}{\partial y^{j}}\in T_{x}M, \ x\in M. $$
Then E-tensor defined as family of symmetric forms $\{E_{y}\}$ can be seen as:  $$E = \Big \{  E_{y}: y \in TM/\{0\} \Big \}, $$ which is known as E-Curvature or mean Berwald curvature.\\
Before moving to main result of this section let us find some important quantities to be used further:
We know that at origin,\\ 
 $ a_{ij} = \delta^{j}_{i}, \ y_{i} = y^{i}$,\\ \
$\alpha_{y_{i}}= \dfrac{y_{i}}{\alpha}, \ \beta_{y^{i}} = b_{i},$\\
$s_{y^{i}} = \dfrac{\partial}{\partial y^{i}}\left( \dfrac{\beta}{\alpha}\right)  = \dfrac{\alpha b_{i} - sy_{i}}{\alpha^{2}},$\\ \\
$ s_{y_{i}y_{j}} = \dfrac{\partial}{\partial y^{j}}\left(\dfrac{\alpha b_{i} - sy_{i}}{\alpha^{2}} \right) \\ \\
=\dfrac{3sy_{i}y_{j} - (b_{i}y_{j}+b_{j}y_{i})\alpha - \alpha^{2}s\delta^{i}_{j}}{\alpha^{4}}.$\\ \\

Consider $\Omega  = \dfrac{ms[(n-nm)s^{2} + (nm+1)b^{2}]}{[(1-m)s^{2}+b^{2}m]^{2}} $
= $\dfrac{mn(1-m)s^{3}+m(nm+1)b^{2}s}{[(1-m)s^{2}+b^{2}m]^{2}}$

\begin{equation}
\begin{split}
\dfrac{\partial \Omega}{\partial y^{j}} = & \dfrac{1}{{[(1-m)s^{2}+b^{2}m]^{4}}}\Bigg[  [(1-m)s^{2}+b^{2}m]^{2}(3mn(1-m)s^{2}s_{y^{j}}+m(nm+1)b^{2}s_{y^{j}}) \\
& -[mn(1-m)s^{3}+m(nm+1)b^{2}s][2(1-m)s^{2}+b^{2}m](1-m)2ss_{y^{j}} \Bigg]   \\ 
= \     &\dfrac{1}{[(1-m)s^{2}+b^{2}m]^{3}}\Bigg[3mn(1-m)^{2}s^{4}+m(1-m)(1+nm+3mnb^{2})s^{2}\\& -4mn(1-m)s^{3}+m(nm+1)b^{2}s +m^{2}(nm+1)b^{4}\Bigg] s_{y^{j}},
\end{split}
\end{equation}

\begin{equation}
\begin{split}
\dfrac{\partial^{2} \Omega }{\partial y^{i}\partial y^{j}} = & \dfrac{\partial}{\partial y^{i}}\left[\dfrac{\splitfrac{ \{3mn(1-m)^{2}s^{4}+m(1-m)(1+nm+3mnb^{2})s^{2}}{-4mn(1-m)s^{3}+m(nm+1)b^{2}s +m^{2}(nm+1)b^{4}\} s_{y^{j}}} }{[(1-m)s^{2}+b^{2}m]^{3}}   \right] \\
= & \dfrac{\partial}{\partial y^{i}}\left[\dfrac{\splitfrac{ \{3mn(1-m)^{2}s^{4}+m(1-m)(1+nm+3mnb^{2})s^{2}}{-4mn(1-m)s^{3}+m(nm+1)b^{2}s +m^{2}(nm+1)b^{4}\} } }{[(1-m)s^{2}+b^{2}m]^{3}}   \right] s_{y^{j}}  \\
+ & \left[ \dfrac{\splitfrac{\{3mn(1-m)^{2}s^{4}+m(1-m)(1+nm+3mnb^{2})s^{2}}{-4mn(1-m)s^{3}+m(nm+1)b^{2}s +m^{2}(nm+1)b^{4}\} s_{y^{j}}} }{[(1-m)s^{2}+b^{2}m]^{3}}   \right] s_{y^{j}{y^{i}} }\\
\end{split}
\end{equation}


\begin{equation}
	\begin{split}
	\dfrac{\partial^{2} \Omega }{\partial y^{i}\partial y^{j}} = &
	\left[\dfrac{\splitfrac{-6mn(1-m)^{3}s^{5}+12mn(1-m)^{2}s^{4}-4m(1-m)^{2}(1+nm)s^{3}}{-m(1-m)(5+17mn)b^{2}s^{2} +2m^{2}(1-m)(mn-2)b^{2}s +m^{2}(1+mn)b^{4}}}{[(1-m)s^{2}+b^{2}m]^{4}}  \right] s_{y^{j}} s_{y^{i}} \\
	+ & \left[ \dfrac{\splitfrac{\{3mn(1-m)^{2}s^{4}+m(1-m)(1+nm+3mnb^{2})s^{2}}{-4mn(1-m)s^{3}+m(nm+1)b^{2}s +m^{2}(nm+1)b^{4}\} s_{y^{j}}} }{[(1-m)s^{2}+b^{2}m]^{3}}   \right] s_{y^{i}{y^{j}} }.\\ 
	\end{split}
\end{equation}

\begin{theorem}
	Let $(G/H,F)$ be a reductive homogeneous Finsler spaces with decomposition of Lie algebra $\mathfrak{g} = \mathfrak{h} +\mathfrak{m},$ and $F = \dfrac{\alpha^{m+1}}{\beta^{m}}$ be a $G$-invariant generalized $m$- Kropina metric. Then mean Berwald curvature is given by 
	\begin{equation}
	\begin{split}
	E_{i,j}(H,y) = & \dfrac{1}{2}\Bigg[ \Big(\frac{1}{\alpha}\Omega_{ij}- \dfrac{y_{i}}{\alpha^{3}} \Omega_{j} -\dfrac{y^{j}}{\alpha^{3}}\Omega_{i}-\dfrac{A}{\alpha^{3}}\delta^{j}_{i}+\dfrac{3\Omega}{\alpha^{5}}y^{i}y^{j}  \Big) \big \langle [v,y]_{m},y \big \rangle \\
	& + \Big( \dfrac{1}{\alpha}\Omega_{j} - \dfrac{\Omega y_{j}}{\alpha^{3}}  \Big) \Big( \big \langle [v,v_{i}]_{m} ,y \big \rangle + \big \langle [v,y]_{m}, v_{i} \big \rangle \Big) \\
	& +\Big( \dfrac{1}{\alpha}\Omega_{i} - \dfrac{\Omega y_{i}}{\alpha^{3}}  \Big) \Big(  \big \langle [v,v_{j}]_{m} ,y \big \rangle + \big \langle [v,y]_{m}, v_{j} \big \rangle  \Big) \\
	& + \dfrac{\Omega}{\alpha}\Big( \  \big \langle [v,v_{j}]_{m} ,v{i} \big \rangle + \big \langle [v,v_{i}]_{m}, v_{j} \big \rangle   \Big) 
	 + \Big( \dfrac{-m}{m+1} \dfrac{\Omega_{i}}{s^{2}}s_{y^{j}} 
	  + \dfrac{2m}{(m+1)s^{3}} \Omega s_{y^{i}} s_{y^{j}} \\
	  & - \dfrac{m}{(m+1)s^{2}} \Omega s_{y^{j}y^{i}} + \dfrac{m}{(m+1)s} \Omega_{ji} - \dfrac{m}{(m+1)s^{2}}s^{y^{i}}\Omega_{j} \Big)   \big \langle [v,y]_{m},v  \big \rangle  \\
	  & + \Big( \dfrac{-m\Omega}{(m+1)s^{2}}s_{y^{j}}+ \dfrac{m}{(m+1)s} \Omega_{j} \Big) \big \langle [v,v_{i}]_{m},v  \big \rangle 
	 	\Bigg]
\end{split}
	\end{equation}
	where, $\Omega = \dfrac{ms[(n-nm)s^{2} + (nm+1)b^{2}]}{[(1-m)s^{2}+b^{2}m]^{2}}, $ also $v\in \mathfrak{m}$ corresponds to 1-form $\beta$ and $\mathfrak{m}$ is identified with the tangent space $T_{H}(G/H)$ of $G/H$ at the origin $H.$
	We denote $\Omega_{i}=\dfrac{\partial \Omega}{\partial y^{i}},$   $\Omega_{j} = \dfrac{\partial \Omega}{\partial y^{j}}$ and 
		$ \Omega_{ij} = \dfrac{\partial^{2} \Omega }{\partial y^{i}y^{j}}.$
\end{theorem}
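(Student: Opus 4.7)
The strategy is to apply the definition of mean Berwald curvature,
$E_{ij} = \tfrac12 \partial^2 S/\partial y^i \partial y^j$,
directly to the S-curvature formula established in Theorem~\ref{thm2}. Writing that formula compactly as
$$ S(H,y) \;=\; \frac{\Omega}{\alpha}\,\bigl\langle [v,y]_{\mathfrak{m}}, y\bigr\rangle \;-\; \frac{m\,\Omega}{(m+1)\,s}\,\bigl\langle [v,y]_{\mathfrak{m}}, v\bigr\rangle, $$
the task reduces to differentiating this expression twice in $y$ and grouping terms according to which bracket quantity they multiply. No new geometric ingredient is needed beyond Theorem~\ref{thm2}; the preliminary identities $a_{ij}=\delta^j_i$, $y_i = y^i$, $\alpha_{y^i}=y_i/\alpha$, $\beta_{y^i}=b_i$, together with the formulas for $s_{y^i}$, $s_{y^iy^j}$, $\Omega_i$, $\Omega_{ij}$ displayed immediately before the theorem statement, supply all the auxiliary derivatives.

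The first key step is to differentiate the bracket factors using bilinearity of $[\cdot,\cdot]_{\mathfrak{m}}$ and $\langle\cdot,\cdot\rangle$ and the fact that $\partial y/\partial y^j$ is the $j$-th basis vector $v_j$ of $\mathfrak{m}$. This gives
$$ \partial_j \bigl\langle [v,y]_{\mathfrak{m}}, y\bigr\rangle \;=\; \bigl\langle [v,v_j]_{\mathfrak{m}}, y\bigr\rangle + \bigl\langle [v,y]_{\mathfrak{m}}, v_j\bigr\rangle, \qquad \partial_j \bigl\langle [v,y]_{\mathfrak{m}}, v\bigr\rangle \;=\; \bigl\langle [v,v_j]_{\mathfrak{m}}, v\bigr\rangle, $$
and the corresponding second derivatives
$$ \partial_i\partial_j \bigl\langle [v,y]_{\mathfrak{m}}, y\bigr\rangle \;=\; \bigl\langle [v,v_j]_{\mathfrak{m}}, v_i\bigr\rangle + \bigl\langle [v,v_i]_{\mathfrak{m}}, v_j\bigr\rangle, \qquad \partial_i\partial_j \bigl\langle [v,y]_{\mathfrak{m}}, v\bigr\rangle \;=\; 0. $$
The second key step is to differentiate the scalar coefficients $\Omega/\alpha$ and $m\Omega/((m+1)s)$ by the product rule, using
$\partial_j(1/\alpha)=-y_j/\alpha^3$, $\partial_i\partial_j(1/\alpha)=-\delta^j_i/\alpha^3+3y^iy^j/\alpha^5$,
together with $\partial_j(1/s)=-s_{y^j}/s^2$ and $\partial_i\partial_j(1/s)=2 s_{y^i} s_{y^j}/s^3 - s_{y^iy^j}/s^2$.

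With these ingredients, applying the product rule twice to each of the two summands of $S(H,y)$ and expanding yields a sum that organises naturally according to the bracket factor it carries: terms in $\langle [v,y]_{\mathfrak{m}},y\rangle$ (from differentiating both $\Omega/\alpha$ twice), terms in $\langle [v,v_i]_{\mathfrak{m}},y\rangle+\langle [v,y]_{\mathfrak{m}},v_i\rangle$ and its $i\leftrightarrow j$ analogue (from differentiating $\Omega/\alpha$ once and the bracket once), terms in $\langle [v,v_j]_{\mathfrak{m}},v_i\rangle+\langle [v,v_i]_{\mathfrak{m}},v_j\rangle$ (from leaving $\Omega/\alpha$ and differentiating the bracket twice), and the analogous groupings for the second summand producing the coefficients of $\langle [v,y]_{\mathfrak{m}},v\rangle$ and $\langle [v,v_i]_{\mathfrak{m}},v\rangle$. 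Matching this grouping line by line against the statement and multiplying by $\tfrac12$ reproduces the displayed formula.

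The only real obstacle is bookkeeping: a product of four factors (a scalar, $1/\alpha$ or $1/s$, and the bracket $\langle[v,y]_{\mathfrak{m}},\cdot\rangle$) differentiated twice produces on the order of a dozen terms, and the scalar-coefficient derivatives $\Omega_i$ and $\Omega_{ij}$ are themselves rather lengthy rational expressions in $s$ and $b^2$. However, since the stated formula in the theorem leaves these coefficients packaged as $\Omega$, $\Omega_i$, $\Omega_{ij}$, $s_{y^i}$, $s_{y^iy^j}$, no further simplification of $\Omega_{ij}$ is required in the proof itself; one only needs to collect terms carefully and verify that the symmetrisation in $i,j$ matches the symmetric form of $E_{ij}$.
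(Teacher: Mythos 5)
Your proposal follows essentially the same route as the paper: the authors also write $S(H,y)=A+B$ with $A=\frac{\Omega}{\alpha}\langle[v,y]_{\mathfrak{m}},y\rangle$ and $B=-\frac{m\Omega}{(m+1)s}\langle[v,y]_{\mathfrak{m}},v\rangle$, then compute $\partial^2 A/\partial y^i\partial y^j$ and $\partial^2 B/\partial y^i\partial y^j$ by the product rule using exactly the auxiliary derivatives of $1/\alpha$, $1/s$, and the bracket terms that you list, and assemble $E_{ij}=\frac12(\partial^2A+\partial^2B)/\partial y^i\partial y^j$. Your plan is correct and matches the paper's proof in both structure and detail.
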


\begin{proof}

	From Theorem \ref{Thm1}, recall that S-curvature at origin given by equation \ref{eq2} is as follows: 
$$	\dfrac{ms[(n-nm)s^{2}+(nm+1)b^{2}]}{[(1-m)s^{2}+b^{2}m]^{2}}\Big[ \dfrac{1}{\alpha} \big \langle [v,y]_{m},y\big \rangle-\dfrac{m}{(m+1)s} \big \langle [v,y]_{m},v \big \rangle  \Big]. $$\\ \\
Let $\Omega = \dfrac{ms[(n-nm)s^{2} + (nm+1)b^{2}]}{[(1-m)s^{2}+b^{2}m]^{2}}, $ which gives \\ \\
	$S(H,y) = \dfrac{\Omega}{\alpha}  \big \langle [v,y]_{m},y\big \rangle -  \dfrac{m\Omega}{(m+1)s} \big \langle [v,y]_{m},v   \big \rangle $\\ \\
	Consider $S(H,y) = A + B,$ where $A = \dfrac{\Omega}{\alpha}  \big \langle [v,y]_{m},y\big \rangle$ and $B= \dfrac{-m\Omega}{(m+1)s} \big \langle [v,y]_{m},v   \big \rangle. $\\ 
	Now, by definition of mean Berwald curvature 
	\begin{equation}{\label{E}}
    E_{ij} = \dfrac{1}{2}\dfrac{\partial^{2}S(H,y)}{\partial y^{i} \partial y^{j}} = \dfrac{1}{2} \Big( \dfrac{\partial^{2}A}{\partial y^{i} y^{j}} + \dfrac{\partial^{2}B}{\partial y^{i} y^{j}}\Big).
    \end{equation}
	
	Next, we calculate $\dfrac{\partial^{2}A}{\partial y^{i} y^{j}}$ and $\dfrac{\partial^{2}B}{\partial y^{i} y^{j}}$ as follows:\\
	
	$\dfrac{\partial A}{\partial y^{j}} = \dfrac{\partial}{\partial y^{j}}\left( \dfrac{\Omega}{\alpha}  \big \langle [v,y]_{m},y\big \rangle\right) $
	$ = \left( \dfrac{1}{\alpha} \dfrac{\partial \Omega}{\partial y^{j}} -\dfrac{\Omega}{\alpha^{2}} \dfrac{y^{j}}{\alpha} \right) \big \langle [v,y]_{m},y \rangle + \dfrac{\Omega}{\alpha} \left( \big \langle [v,v_{j}]_{m} \big \rangle +\big \langle [v,y]_{m},v_{j} \big \rangle   \right) $
	
	$\dfrac{\partial^{2}A}{\partial y^{i} y^{j}} = \dfrac{\partial}{\partial y^{i}} \left( \dfrac{1}{\alpha} \dfrac{\partial \Omega}{\partial y^{j}} -\dfrac{\Omega}{\alpha^{2}} \dfrac{y^{j}}{\alpha} \right) \big \langle [v,y]_{m},y \rangle + \dfrac{\Omega}{\alpha} \Big( \big \langle [v,v_{j}]_{m} \big \rangle +\big \langle [v,y]_{m},v_{j} \big \rangle   \Big) $
	\begin{equation}{\label{eqq1}}
	\begin{split}
    = & \Big(\frac{1}{\alpha}\Omega_{ij}- \dfrac{y_{i}}{\alpha^{3}} \Omega_{j} -\dfrac{y^{j}}{\alpha^{3}}\Omega_{i}-\dfrac{A}{\alpha^{3}}\delta^{j}_{i}+\dfrac{3\Omega}{\alpha^{5}}y^{i}y^{j}  \Big) \big \langle [v,y]_{m},y \big \rangle \\
	& + \Big( \dfrac{1}{\alpha}\Omega_{j} - \dfrac{\Omega y_{j}}{\alpha^{3}}  \Big) \Big( \big \langle [v,v_{i}]_{m} ,y \big \rangle + \big \langle [v,y]_{m}, v_{i} \big \rangle \Big) \\
	& +\Big( \dfrac{1}{\alpha}\Omega_{i} - \dfrac{\Omega y_{i}}{\alpha^{3}}  \Big) \Big(  \big \langle [v,v_{j}]_{m} ,y \big \rangle + \big \langle [v,y]_{m}, v_{j} \big \rangle  \Big) \\
	& + \dfrac{\Omega}{\alpha}\Big( \  \big \langle [v,v_{j}]_{m} ,v{i} \big \rangle + \big \langle [v,v_{i}]_{m}, v_{j} \big \rangle   \Big)
	\end{split}
	\end{equation}
	Now we wish to calculate $\dfrac{\partial^{2}B}{\partial y^{i} y^{j}}$,\\ \\ 
	Consider 
	\begin{equation}
	\begin{split}
	\dfrac{\partial B}{\partial y^{j}} = & \dfrac{\partial}{\partial y^{j}} \Big[\dfrac{-m\Omega}{(m+1)s} \big \langle [v,y]_{m},v   \big \rangle \Big] \\
	&  = \Big[  \dfrac{-m\Omega}{(m+1)s^{2}}   s_{y^{j}} + \dfrac{m}{(m+1)s} \Omega_{j} \Big] \big \langle [v,y]_{m},v \big \rangle +  \Big[ \dfrac{m \Omega}{(m+1)s} \Big] \big \langle [v,v_{j}]_{m}, v \big \rangle  
	\end{split}
	\end{equation}
\begin{equation}{\label{eqq2}}
\begin{split}
 \dfrac{\partial^{2}B}{\partial y^{i} y^{j}} = & \dfrac{\partial}{\partial y^{i}} \Bigg[ \Big(   \dfrac{-m\Omega}{(m+1)s^{2}}   s_{y^{j}} + \dfrac{m}{(m+1)s} \Omega_{j} \Big)   \big \langle [v,y]_{m},v \big \rangle +  \Big( \dfrac{m \Omega}{(m+1)s} \Big) \big \langle [v,v_{j}]_{m}, v \big \rangle  \Bigg]\\
  & = \Big[ \dfrac{-m}{m+1} \dfrac{\Omega_{i}}{s^{2}}s_{y^{j}} 
 + \dfrac{2m}{(m+1)s^{3}} \Omega s_{y^{i}} s_{y^{j}}  - \dfrac{m}{(m+1)s^{2}} \Omega s_{y^{j}y^{i}} + \dfrac{m}{(m+1)s} \Omega_{ji} \\
 &  - \dfrac{m}{(m+1)s^{2}}s^{y^{i}}\Omega_{j} \Big]  \big \langle [v,y]_{m},v  \big \rangle + \Big[ \dfrac{-m\Omega}{(m+1)s^{2}}s_{y^{j}}+ \dfrac{m}{(m+1)s} \Omega_{j} \Big] \big \langle [v,v_{i}]_{m},v  \big \rangle.
 \end{split}
	\end{equation}

	Using equation \ref{eqq1} and equation \ref{eqq2} in equation \ref{E}, we get 
	
		\begin{equation}
	\begin{split}
	E_{i,j}(H,y) = & \dfrac{1}{2}\Bigg[ \Big(\frac{1}{\alpha}\Omega_{ij}- \dfrac{y_{i}}{\alpha^{3}} \Omega_{j} -\dfrac{y^{j}}{\alpha^{3}}\Omega_{i}-\dfrac{A}{\alpha^{3}}\delta^{j}_{i}+\dfrac{3\Omega}{\alpha^{5}}y^{i}y^{j}  \Big) \big \langle [v,y]_{m},y \big \rangle \\
	& + \Big( \dfrac{1}{\alpha}\Omega_{j} - \dfrac{\Omega y_{j}}{\alpha^{3}}  \Big) \Big( \big \langle [v,v_{i}]_{m} ,y \big \rangle + \big \langle [v,y]_{m}, v_{i} \big \rangle \Big) \\
	& +\Big( \dfrac{1}{\alpha}\Omega_{i} - \dfrac{\Omega y_{i}}{\alpha^{3}}  \Big) \Big(  \big \langle [v,v_{j}]_{m} ,y \big \rangle + \big \langle [v,y]_{m}, v_{j} \big \rangle  \Big) \\
	& + \dfrac{\Omega}{\alpha}\Big( \  \big \langle [v,v_{j}]_{m} ,v{i} \big \rangle + \big \langle [v,v_{i}]_{m}, v_{j} \big \rangle   \Big) 
	+ \Big( \dfrac{-m}{m+1} \dfrac{\Omega_{i}}{s^{2}}s_{y^{j}} 
	+ \dfrac{2m}{(m+1)s^{3}} \Omega s_{y^{i}} s_{y^{j}} \\
	& - \dfrac{m}{(m+1)s^{2}} \Omega s_{y^{j}y^{i}} + \dfrac{m}{(m+1)s} \Omega_{ji} - \dfrac{m}{(m+1)s^{2}}s^{y^{i}}\Omega_{j} \Big)   \big \langle [v,y]_{m},v  \big \rangle  \\
	& + \Big( \dfrac{-m\Omega}{(m+1)s^{2}}s_{y^{j}}+ \dfrac{m}{(m+1)s} \Omega_{j} \Big) \big \langle [v,v_{i}]_{m},v  \big \rangle 
	\Bigg].
	\end{split}
	\end{equation}

\end{proof}

	\section*{Acknowledgments}
 Second author  is very much thankful to UGC for providing financial assistance in terms of JRF fellowship vide letter with UGC-Ref no.:1010/(CSIR-UGC NET DEC 2018). Third author is thankful to UGC for providing financial assistace in terms of JRF fellowship vide letter with UGC-Ref no. :961/(CSIR-UGC NET DEC 2018).

\end{document}